\definecolor{myurlcolor}{rgb}{0.1,0.1,0.8}
\definecolor{mylinkcolor}{rgb}{0.05,0.05,0.4}
\newcommand{\such}{:}
\newcommand{\demph}[1]{\textbf{\textup{#1}}}
\newcommand{\iso}{\cong}
\newcommand{\of}{\circ}
\newcommand{\sub}{\subseteq}
\newcommand{\cell}[4]{\put(#1,#2){\makebox(0,0)[#3]{#4}}}
\DeclareMathOperator{\im}{im}
\newcommand{\from}{\colon}
\newcommand{\vc}[1]{\mathbf{#1}} 
\newcommand{\bhardref}[1]{(#1)}
\newcommand{\lengths}{\setlength{\unitlength}{1mm}%
\setlength{\fboxsep}{0pt}}
\DeclareMathOperator{\Nil}{Nil}
\DeclareMathOperator{\Lin}{Lin}
\DeclareMathOperator{\Aut}{Aut}
\DeclareMathOperator{\spn}{span}
\newtheorem{thm}{Theorem}
\newtheorem{lemma}[thm]{Lemma}
\theoremstyle{nonumberplain}
\newtheorem{proof}{Proof}
\newcommand{\theoremtobeproved}{}
\newtheorem{pfoftheorem}{Proof of \theoremtobeproved}
\title{\vspace{-10mm}%
The probability that an operator is nilpotent}
\author{Tom Leinster%
\thanks{School of Mathematics, University of Edinburgh, Scotland; 
  Tom.Leinster@ed.ac.uk}}
\date{\vspace{-5ex}}
\begin{document}

\sloppy
\maketitle

\begin{abstract}
\noindent
Choose a random linear operator on a vector space of finite cardinality
$N$: then the probability that it is nilpotent is $1/N$.  This is a linear
analogue of the fact that for a random self-map of a set of cardinality
$N$, the probability that some iterate is constant is $1/N$.  The first
result is due to Fine, Herstein and Hall, and the second is essentially
Cayley's tree formula.  We give a new proof of the result on nilpotents,
analogous to Joyal's beautiful proof of Cayley's formula.  It uses only
general linear algebra and avoids calculation entirely.
\end{abstract}

\section{Introduction}

This note is about the theorem that on a finite-dimensional vector space
$X$ over a finite field, the probability of a randomly-chosen linear
operator being nilpotent is $1$ over the number of elements of $X$.  If the
field has order $q$ and $\dim X = n$ then there are $q^n$ elements of $X$
and $q^{n^2}$ operators on $X$, so an equivalent result is that $X$ admits
$q^{n(n - 1)}$ nilpotents.

This theorem was first published in 1958 by Fine and Herstein~\cite{FiHe},
who gave a calculation-heavy proof using the theory of partitions.
Subsequently, others found proofs needing less calculation: first
Gerstenhaber~\cite{Gers} in 1961 (avoiding partitions), then
Kaplansky~\cite{Kapl} in 1990 (with an inclusion-exclusion argument), then
Crabb~\cite{Crab} in 2006 (imitating the proof of Cayley's formula by
Pr\"ufer codes), then Brouwer, Gow and Sheekey~\cite{BGS} in 2014 (with a
very efficient proof using the Fitting decomposition and $q$-binomial
coefficients). Brouwer, Gow and Sheekey also mention unpublished 1955 lecture
notes of Philip Hall, stating that he gave two proofs, `one involving a
form of M\"obius inversion, the other exploiting the theory of partitions'
(\cite{BGS}, Section~2.1).

This note gives a new proof requiring no calculation or manipulation of
algebraic expressions at all---at least, granted some standard linear
algebra.  It is analogous to Joyal's beautiful proof of Cayley's formula
(\cite{JoyaTCS}, p.~16), in the following sense.

Cayley's formula states that for a finite set $X$ with $N > 0$ elements,
the number of (unrooted) trees with vertex-set $X$ is $N^{N - 2}$.  A
\demph{rooted} tree is a tree together with a choice of vertex, called the
root; there are $N^{N - 1}$ of these.  They can be identified with the
functions $T \from X \to X$ that are \demph{eventually constant}, meaning
that $T^k = T \of \cdots \of T$ is constant for some $k \geq 0$.  Indeed,
if we orient the edges of a rooted tree towards the root, the resulting
diagram depicts an eventually constant function, with the root $z$ as the
eventual constant value and an edge from $x$ to $T(x)$ for each $x \neq z$.
Hence there are $N^{N - 1}$ eventually constant functions $X \to
X$.  Equivalently, the probability of a random function $X \to X$ being
eventually constant is $N^{N - 1}/N^N = 1/N$.

When $X$ is a vector space, the eventually constant linear maps $T \from X
\to X$ are exactly the \demph{nilpotent} operators (those satisfying $T^k =
0$ for some $k$).  So Cayley's formula and the theorem on nilpotents both
state, in different contexts, the probability that a random self-map is
eventually constant.

Joyal proved Cayley's formula by constructing a
bijection  
\begin{equation}
\label{eq:cayley-joyal}
\{\text{trees on } X \} \times X \times X
\iso
\{\text{functions } X \to X \}
\end{equation}
for any nonempty finite set $X$. His construction depends on an arbitrary
choice, for each $V \sub X$, of a bijection between the total orders on $V$
and the permutations of $V$.  It runs as follows.  An element of the
left-hand side of~\eqref{eq:cayley-joyal} is a tree with chosen vertices
$v$ and $v'$. There is a unique path from $v$ to $v'$, and the set $V$ of
vertices along that path is naturally ordered
(Fig.~\ref{fig:cayley}\bhardref{a}).
\begin{figure}
\centering%
\lengths%
\begin{picture}(120,52)(0,-2)
\cell{0}{3}{bl}{\includegraphics[width=28\unitlength]{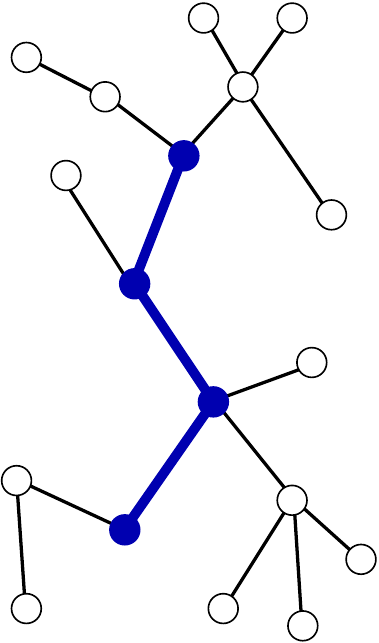}}
\cell{14}{-1}{c}{(a)}
\cell{10}{23}{c}{$V$}
\cell{11}{38.5}{c}{$v$}
\cell{12.5}{11}{c}{$v'$}
\cell{46}{3}{bl}{\includegraphics[width=28\unitlength]{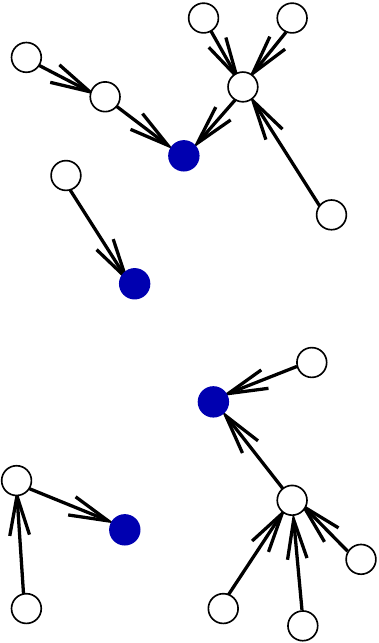}}
\cell{60}{-1}{c}{(b)}
\cell{57}{38}{c}{$0$}
\cell{53.5}{29}{c}{$1$}
\cell{59}{20.5}{c}{$2$}
\cell{58}{11.5}{c}{$3$}
\cell{92}{3}{bl}{\includegraphics[width=28\unitlength]{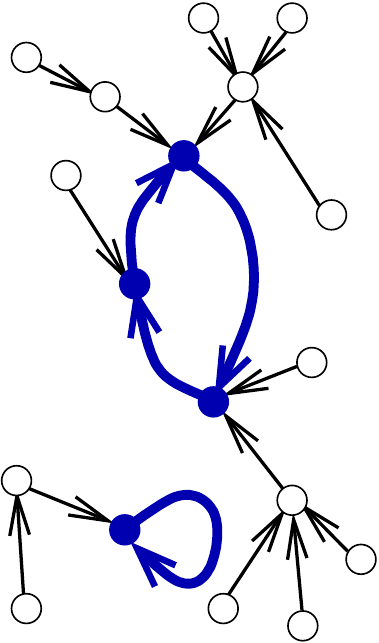}}
\cell{106}{-1}{c}{(c)}
\end{picture}%
\caption{Joyal's proof of Cayley's formula}
\label{fig:cayley}  
\end{figure}
Thus, a tree on $X$ with two distinguished vertices amounts to a totally
ordered subset $V$ of $X$ together with a family of rooted trees
partitioning $X$, the roots being the elements of $V$
(Fig.~\ref{fig:cayley}\bhardref{b}).  As before, we can harmlessly
orient the edges of these trees towards their roots.

We can equivalently replace the total order on $V$ by the corresponding
permutation.  This produces a diagram as in Fig.~\ref{fig:cayley}\bhardref{c}
(where the permutation shown is necessarily arbitrary).  But such a diagram
is simply the graph of a function $X \to X$, with $V$ as its set of
periodic points.  This completes the proof.

Our proof of the formula for nilpotents follows a similar pattern. For
example, Joyal's argument uses the decomposition of an endomorphism of a
finite set into a permutation and some eventually constant functions, and
our argument uses the decomposition of an endomorphism of a
finite-dimensional vector space into an automorphism and a nilpotent.
However, the translation to the linear context is not entirely mechanical.
An important difference is that whereas a subset of a set has only one
complement, a subspace of a vector space has many complementary subspaces.

One feature of Joyal's proof is that the total orders and permutations of a
finite set $V$ are in \emph{non-canonical} bijection.  This situation can
be understood through Joyal's theory of species~\cite{JoyaTCS}, or by
noting that we have a \demph{torsor}: a nonempty set $S$ acted on by a
group $G$ in such a way that for each $s, s' \in S$, there is a unique $g
\in G$ satisfying $gs = s'$.  Every element $s_0 \in S$ defines a bijection
$G \iso S$ by $g \leftrightarrow gs_0$, but in general there is no
\emph{canonical} bijection $G \iso S$: for which element of $S$ would
correspond to the identity element of $G$?  In the case at hand, $G$ is the
group of permutations of $V$ acting on the set of total orders on $V$. We
will use an analogous torsor in the linear context.

\section{Background linear algebra}
\label{sec:la}

Throughout, all vector spaces are over an arbitrary field, not necessarily
finite.  

Subspaces $U$ and $V$ of a vector space $X$ are
\demph{complementary} if $U \cap V = \{0\}$ and $U + V = X$; we also say
that $U$ is a \demph{complement} of $V$.  Although a subspace generally has
many complements, any two are canonically isomorphic.  Indeed, let $U$ and
$W$ be complements of $V \sub X$; then there is an isomorphism $i \from U
\to W$ defined by taking $i(u)$ to be the unique element of $W$ such that
$i(u) - u \in V$ (Fig.~\ref{fig:comps}).

\begin{figure}
\centering%
\lengths%
\setlength{\unitlength}{1.1mm}
\begin{picture}(40,20)
\cell{0}{0}{bl}{\includegraphics[width=40\unitlength]{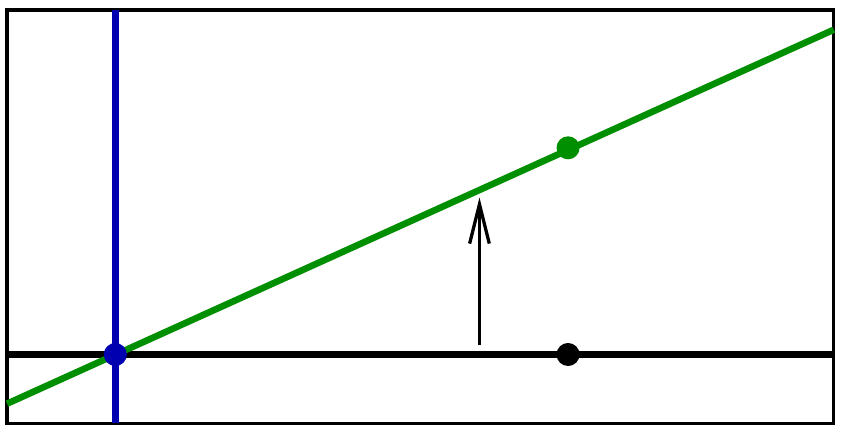}}
\cell{4}{18}{c}{\small $V$}
\cell{38}{2}{c}{\small $U$}
\cell{33}{18}{c}{\small $W$}
\cell{29}{5}{c}{\small $u$}
\cell{30.5}{12}{c}{\small $i(u)$}
\cell{24}{7}{c}{\small $i$}
\cell{21}{7}{c}{\small $\iso$}
\end{picture}%
\caption{The canonical isomorphism $i$ between complements $U$ and $W$ of $V$}
\label{fig:comps}
\end{figure}

For a linear map $f \from U \to V$ between vector spaces $U$ and $V$, its
graph
\[
W_f = \{ (u, v) \in U \oplus V \such f(u) = v\}
\]
is a complement of $V$ in $U \oplus V$.  Moreover, for any complement $W$
of $V$ in $U \oplus V$, there is a unique linear map $f \from U \to V$ such
that $W_f = W$: in the notation of the previous paragraph, $f(u) = i(u) -
u$.  Hence:
\begin{lemma}
\label{lemma:graphs}
For vector spaces $U$ and $V$, there is a canonical bijection
\[
\{\text{linear maps } U \to V\}
\iso
\{\text{complements of } V \text{ in } U \oplus V\}.
\]
\end{lemma}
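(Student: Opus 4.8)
The plan is to exhibit the two maps explicitly and check that they are mutually inverse, leaning entirely on the graph construction and the canonical isomorphism of complements described just above. In one direction, I would send a linear map $f \from U \to V$ to its graph $W_f$. The first thing to verify is that $W_f$ really is a complement of $V$ in $U \oplus V$, where I identify $V$ with $\{0\} \oplus V$. For the intersection, an element $(u, f(u))$ lying in $\{0\} \oplus V$ forces $u = 0$, hence the element is zero; for the sum, I would write $(u, v) = (u, f(u)) + (0, v - f(u))$, displaying an arbitrary element of $U \oplus V$ as a sum of something in $W_f$ and something in $V$. Both checks are immediate.

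In the other direction, given a complement $W$ of $V$, I would recover a linear map using the canonical isomorphism of complements. Identifying $U$ with $U \oplus \{0\}$, both $U$ and $W$ are complements of $V$, so the earlier paragraph supplies a canonical isomorphism $i \from U \to W$ with $i(u) - u \in V$ for all $u$. Setting $f(u) = i(u) - u$ then defines a linear map $U \to V$, linear because $i$ is, and this is the assignment $W \mapsto f$.

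The core of the argument is to check that these two assignments are mutually inverse, and this is where I would spend the most care, though it remains a short computation rather than a genuine obstacle. Starting from $f$, the canonical isomorphism $U \to W_f$ sends $u$ to the unique point of $W_f$ congruent to $u$ modulo $V$, namely $(u, f(u))$, so the recovered map sends $u$ to $(u, f(u)) - (u, 0) = f(u)$, returning $f$. Conversely, starting from $W$ and forming $f(u) = i(u) - u$, the graph $W_f = \{\, u + f(u) \such u \in U \,\} = \{\, i(u) \such u \in U \,\}$ is exactly the image of the isomorphism $i$, which is $W$.

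Finally, I would note that nothing in either direction involves an arbitrary choice: both the graph $W_f$ and the isomorphism $i$ are canonically determined by the data. Hence the bijection is canonical, as claimed. The only point demanding genuine attention is keeping the two identifications $U \iso U \oplus \{0\}$ and $V \iso \{0\} \oplus V$ straight, so that the canonical isomorphism of the previous paragraph is applied to the correct pair of complements; once that bookkeeping is fixed, there is no real difficulty.
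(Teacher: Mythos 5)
Your proof is correct and follows essentially the same route as the paper, which likewise sends $f$ to its graph $W_f$ and recovers $f$ from a complement $W$ via the canonical isomorphism $i \from U \to W$ by setting $f(u) = i(u) - u$. You simply make explicit the verifications (that $W_f$ is a complement and that the two assignments are mutually inverse) that the paper leaves to the reader.
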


Now we turn to nilpotents. Our first lemma is a straightforward induction.

\begin{lemma}
\label{lemma:li}
Let $T$ be a nilpotent operator on a vector space $X$, and let $v \in X$.
Let $k \geq 0$ be least such that $T^k(v) = 0$.  Then $v, T(v), \ldots,
T^{k - 1}(v)$ are linearly independent.
\qed
\end{lemma}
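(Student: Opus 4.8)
The plan is to read off everything from the minimality of $k$. By definition $T^k(v) = 0$ while $T^j(v) \neq 0$ for every $j < k$; in particular $T^j(v) = 0$ for all $j \geq k$, since $T^j(v) = T^{j-k}(T^k(v))$. If $k = 0$ then $v = 0$ and the list $v, T(v), \ldots, T^{k-1}(v)$ is empty, hence vacuously linearly independent, so I assume $k \geq 1$ from now on. The goal is then to show that any linear relation
\[
a_0 v + a_1 T(v) + \cdots + a_{k-1} T^{k-1}(v) = 0
\]
forces all the coefficients $a_i$ to vanish.

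The key step is to apply successively decreasing powers of $T$ to this relation, peeling off one coefficient at a time. Applying $T^{k-1}$ and using $T^j(v) = 0$ for $j \geq k$ annihilates every term except the first, leaving $a_0 T^{k-1}(v) = 0$; since $T^{k-1}(v) \neq 0$ by minimality of $k$, we conclude $a_0 = 0$. Substituting this back and applying $T^{k-2}$ to the surviving relation isolates $a_1 T^{k-1}(v) = 0$, whence $a_1 = 0$, and continuing in this way through $k$ steps yields $a_0 = a_1 = \cdots = a_{k-1} = 0$.

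Equivalently, this can be packaged as the advertised induction on $k$: one observes that the least power annihilating $T(v)$ is exactly $k - 1$, so by the inductive hypothesis $T(v), T^2(v), \ldots, T^{k-1}(v)$ are linearly independent; then applying $T^{k-1}$ to a dependence among $v, T(v), \ldots, T^{k-1}(v)$ forces the coefficient of $v$ to vanish, and the remaining dependence is handled by the hypothesis. There is no genuine obstacle here—the single point requiring care is the bookkeeping guaranteeing $T^{k-1}(v) \neq 0$, which is precisely the minimality of $k$, together with the degenerate case $k = 0$ noted above.
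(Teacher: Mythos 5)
Your proof is correct and matches the paper's approach: the paper leaves this lemma unproved, remarking only that it is ``a straightforward induction,'' and your argument---applying successively smaller powers of $T$ to a putative linear relation to peel off coefficients one at a time, with the equivalent inductive packaging---is precisely that standard argument. You also correctly handle the degenerate case $k = 0$ and use minimality of $k$ exactly where it is needed, namely to ensure $T^{k-1}(v) \neq 0$.
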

%


\begin{lemma}
\label{lemma:block}
Let $U$ and $V$ be vector spaces and let $T$ be a linear operator on $U
\oplus V$ such that $TV \sub V$.  Write $\begin{pmatrix} T_{UU} &0
  \\ T_{UV} & T_{VV} \end{pmatrix}$ for the block decomposition of $T$.
Then
\[
T \text{ is nilpotent} 
\iff
T_{UU} \text{ and } T_{VV} \text{ are nilpotent.}
\]
\end{lemma}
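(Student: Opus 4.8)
The plan is to exploit the block lower-triangular shape of $T$, which comes precisely from the invariance hypothesis $TV \sub V$: writing elements of $U \oplus V$ as columns, we have $T(u, v) = (T_{UU} u, \, T_{UV} u + T_{VV} v)$, with no $V \to U$ term. Two observations underlie everything. First, $T_{VV}$ is just the restriction of $T$ to the invariant subspace $V$, since for $v \in V$ we get $T(0, v) = (0, T_{VV} v)$. Second, $T_{UU}$ is the operator induced by $T$ on the quotient $X/V \iso U$, since the class of $T(u, 0)$ modulo $V$ corresponds to $T_{UU} u$.

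For the forward implication I would use that a product of two block lower-triangular operators is again block lower-triangular, with diagonal blocks the products of the corresponding diagonal blocks. An immediate induction then gives that $T^n$ has diagonal blocks $T_{UU}^n$ and $T_{VV}^n$. Hence if $T^n = 0$ then $T_{UU}^n = 0$ and $T_{VV}^n = 0$, so both are nilpotent. (Equivalently, and in keeping with the calculation-free spirit of the note: a restriction of a nilpotent operator to an invariant subspace is nilpotent, and so is the operator it induces on the quotient.)

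The reverse implication is the only part needing a little care, since one cannot simply raise $T$ to a common power: the off-diagonal block $T_{UV}$ couples $U$ and $V$, so the two nilpotency degrees will add rather than merely max. The idea is to drive $X$ down the filtration $X \supseteq V \supseteq 0$ in two stages. Suppose $T_{UU}^m = 0$ and $T_{VV}^p = 0$. Because $T_{UU}$ is the map induced on $X/V$, the vanishing $T_{UU}^m = 0$ says exactly that $T^m$ sends every vector into $V$, that is, $T^m X \sub V$. Because $T|_V = T_{VV}$ with $T_{VV}^p = 0$, we have $T^p V = 0$. Composing the two stages gives $T^{m+p} X = T^p(T^m X) \sub T^p V = 0$, so $T^{m+p} = 0$ and $T$ is nilpotent. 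I expect no genuine obstacle here; the one point to get right is precisely this exponent bookkeeping, which reflects that $T$ must first collapse the quotient $X/V$ and only afterwards annihilate $V$.
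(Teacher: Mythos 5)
Your proof is correct and follows essentially the same route as the paper: the forward direction by observing that the diagonal blocks of $T^k$ are $T_{UU}^k$ and $T_{VV}^k$, and the reverse direction by the same two-stage argument $T^m X \sub V$ followed by $T^p V = \{0\}$, adding the exponents. The quotient-space interpretation of $T_{UU}$ is a pleasant gloss but does not change the underlying argument.
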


\begin{proof}
For $u \in U$ and $k \geq 0$, the $U$-component of $T^k(u, 0)$ is
$T_{UU}^k(u)$.  Hence if $T^k = 0$ then $T_{UU}^k = 0$, and similarly
$T_{VV}^k = 0$.  Conversely, suppose that $T_{UU}^\ell = 0$ and $T_{VV}^m =
0$.  Then $T^\ell(U \oplus V) \sub V$, so $T^{\ell + m}(U \oplus V) \sub
T^m V = \{0\}$.
\end{proof}

An \demph{automorphism} of a vector space is an invertible operator.  Every
operator decomposes uniquely as the direct sum of a nilpotent and an
automorphism: 

\begin{lemma}[Fitting]
\label{lemma:fitting}
Let $Q$ be a linear operator on a finite-dimensional vector space.  Then
there is a unique pair $(W, V)$ of complementary $Q$-invariant subspaces
such that $Q$ is nilpotent on $W$ and an automorphism of $V$.
\end{lemma}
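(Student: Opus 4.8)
The plan is to build the decomposition explicitly out of the kernels and images of the powers of $Q$, and then to extract uniqueness from a single dimension count. Write $X$ for the space and $n = \dim X$. First I would note that the ascending chain $\ker Q \sub \ker Q^2 \sub \cdots$ and the descending chain $\im Q \supseteq \im Q^2 \supseteq \cdots$ both stabilize by finite-dimensionality, and in fact do so by the $n$th step: $\ker Q^k = \ker Q^n$ and $\im Q^k = \im Q^n$ for all $k \geq n$. This lets me define the two candidate subspaces $W = \ker Q^n = \bigcup_k \ker Q^k$ and $V = \im Q^n = \bigcap_k \im Q^k$.

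Next I would verify the four properties asked of $(W, V)$. Both are $Q$-invariant: if $Q^n x = 0$ then $Q^n(Q x) = 0$, so $Q W \sub W$; and $Q V = \im Q^{n+1} = \im Q^n = V$. By construction $Q^n$ kills $W$, so $Q$ is nilpotent on $W$; and the equality $Q V = V$ shows $Q$ maps $V$ onto itself, which together with finiteness of $\dim V$ (rank--nullity on $V$) makes $Q|_V$ an automorphism. The heart of the matter is complementarity. For $W \cap V = \{0\}$: if $x = Q^n y$ with $Q^n x = 0$, then $Q^{2n} y = 0$, so $y \in \ker Q^{2n} = \ker Q^n$ by stabilization, whence $x = Q^n y = 0$. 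Rank--nullity applied to $Q^n$ then gives $\dim W + \dim V = \dim X$, so $W \oplus V = X$. This settles existence.

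For uniqueness, suppose $(W', V')$ is any pair with the stated properties. Since $Q$ is nilpotent on $W'$, some power $Q^m$ vanishes on $W'$, so $W' \sub \ker Q^m \sub \ker Q^n = W$ (using that the kernel chain has stabilized). Since $Q$ restricts to an automorphism of $V'$, we have $V' = Q^n V' \sub \im Q^n = V$. As $(W', V')$ is also complementary, $\dim W' + \dim V' = \dim X = \dim W + \dim V$; combined with the two inclusions, this forces $W' = W$ and $V' = V$.

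I expect the main obstacle to be the disjointness $W \cap V = \{0\}$. This is the one step that genuinely exploits stabilization of the kernel chain at the \emph{same} index governing the image chain, rather than reducing to a one-line invariance check or a bare rank computation; getting the powers to line up (the $Q^{2n} y = 0 \Rightarrow Q^n y = 0$ step) is where the finite-dimensionality really does the work. Everything else is either a routine invariance verification or a dimension count riding on rank--nullity.
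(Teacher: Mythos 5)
Your proof is correct, and it rests on exactly the subspaces the paper itself uses: the paper's ``proof'' is a citation to Jacobson together with the explicit formulas $W = \bigcup_{i \geq 0} \ker(Q^i)$ and $V = \bigcap_{i \geq 0} \im(Q^i)$, which by your stabilization observation are precisely your $\ker Q^n$ and $\im Q^n$. You have simply supplied in full, and correctly, the standard argument that the paper delegates to the textbook reference, including the key disjointness step ($Q^{2n}y = 0 \Rightarrow Q^n y = 0$) and the dimension count that yield complementarity and uniqueness.
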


\begin{proof}
See standard texts such as Jacobson~\cite{JacoBA2}
(Section~3.4). Explicitly, $W = \bigcup_{i \geq 0} \ker(Q^i)$ and $V =
\bigcap_{i \geq 0} \im(Q^i)$.
\end{proof}

Finally, let $X$ be a finite-dimensional vector space.  The set of ordered
bases $(x_1, \ldots, x_n)$ of $X$ is a torsor (in the sense of the
Introduction) over the automorphism group $\Aut(X)$, so there is a
bijection between the set of ordered bases and $\Aut(X)$.

\section{The proof}
\label{sec:pf}

\begin{thm}
\label{thm:main}
Let $X$ be a finite-dimensional vector space over any field. Then there is
a bijection
\[
\Nil(X) \times X \iso \Lin(X),
\]
where $\Nil(X)$ is the set of nilpotent operators on $X$ and $\Lin(X)$ is
the set of all linear operators on $X$.
\end{thm}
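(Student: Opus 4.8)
The plan is to build the bijection $\Nil(X) \times X \iso \Lin(X)$ by imitating Joyal's construction, with the Jordan chain generated by a vector playing the role of his path, and the Fitting decomposition of Lemma~\ref{lemma:fitting} playing the role of the split into periodic and transient parts. As in Joyal's argument there will be arbitrary choices, so I will fix, once and for all and for every subspace $V \sub X$, two things: a bijection between the ordered bases of $V$ and the group $\Aut(V)$ (the torsor recalled at the end of Section~\ref{sec:la}), and a reference complement $W_0 = W_0(V)$ of $V$ in $X$. These are the linear counterpart of Joyal's arbitrary order--permutation bijection.

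I will build the map in the direction $\Nil(X) \times X \to \Lin(X)$, mirroring Joyal's tree-to-function construction. Given $(S, x)$, I let $k$ be least with $S^k(x) = 0$ and put $b_i = S^{i-1}(x)$ for $1 \le i \le k$; by Lemma~\ref{lemma:li} these form a basis of the subspace $V = \spn(b_1, \ldots, b_k)$ they span, which is $S$-invariant with $S(b_i) = b_{i+1}$ for $i < k$ and $S(b_k) = 0$. This chain is the analogue of Joyal's path, and $V$ will turn out to be the Fitting automorphism subspace of the operator $Q$ I am about to produce. The ordered basis $(b_1, \ldots, b_k)$ corresponds, under the chosen bijection, to an automorphism $A \in \Aut(V)$. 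Decomposing $X = W_0 \oplus V$ and using $SV \sub V$, Lemma~\ref{lemma:block} presents $S$ in block-triangular form whose diagonal blocks are a nilpotent $\nu_0$ on $W_0$ and the shift on $V$, and whose off-diagonal block is a linear map $\gamma \from W_0 \to V$. By Lemma~\ref{lemma:graphs}, $\gamma$ names a complement $W$ of $V$, and the canonical isomorphism $W_0 \iso W$ of Figure~\ref{fig:comps} carries $\nu_0$ to a nilpotent $\nu$ on $W$. Finally I set $Q = A \oplus \nu$ relative to $X = V \oplus W$.

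To prove this is a bijection I will exhibit the inverse using Lemma~\ref{lemma:fitting}: any $Q$ has a unique Fitting decomposition $X = V \oplus W$ with $A = Q|_V \in \Aut(V)$ and $\nu = Q|_W \in \Nil(W)$, and each of the three steps above (the torsor, the graph lemma, and the transport of the nilpotent along the canonical isomorphism of complements) is reversible, reconstructing a pair $(S, x)$ with $x = b_1$. The one thing that needs checking is that the $V$ read off from $Q$ really is the cyclic subspace $\spn(x, S(x), \ldots)$ of the reconstructed $S$; this holds because $V$ is $S$-invariant with $S|_V$ the shift along the chain, so $\langle x \rangle_S = V$ and $S^k(x) = 0$.

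The step I expect to be the main obstacle is the one the introduction flags: a subspace has many complements. On the $\Lin(X)$ side the Fitting complement $W$ ranges freely over all complements of $V$, whereas on the $\Nil(X) \times X$ side the corresponding freedom is the off-diagonal block of $S$, and reconciling the two is not automatic. This is exactly what Lemma~\ref{lemma:graphs}, which realises complements as graphs of linear maps, together with the canonical isomorphism between complements, is there to accomplish. Getting this matching to be simultaneously well defined and invertible is the crux: the naive choice $\gamma = 0$ does not work, since a nilpotent operator can possess several invariant complements to $V$ and the complement $W$ would then fail to be recoverable from $(S,x)$.
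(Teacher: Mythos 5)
Your proposal is correct and follows essentially the same route as the paper: the same arbitrary choices (a basis--automorphism bijection and a reference complement for each subspace $V$), the same use of Lemmas~\ref{lemma:li}, \ref{lemma:block}, \ref{lemma:graphs}, the canonical isomorphism between complements, and the Fitting decomposition of Lemma~\ref{lemma:fitting}, merely written as an explicit forward map with its inverse rather than as the paper's chain of equivalences of data. The point you single out as the crux---that the freedom in the Fitting complement $W$ must be matched by the off-diagonal block via Lemma~\ref{lemma:graphs}---is exactly the pivot of the paper's argument as well.
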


\begin{proof}
For each subspace $V$ of $X$, choose a bijection between the ordered bases
of $V$ and the automorphisms of $V$, and choose a complementary subspace
$V^\perp$.  

Let $(T, v) \in \Nil(X) \times X$.  Write $V = \spn\{T^i(v) \such i \geq
0\}$ (Fig.~\ref{fig:lin}\bhardref{a}).
\begin{figure}
\centering%
\lengths%
\begin{picture}(120,52)(0,-2)
\cell{0}{3}{bl}{\includegraphics[height=44mm]{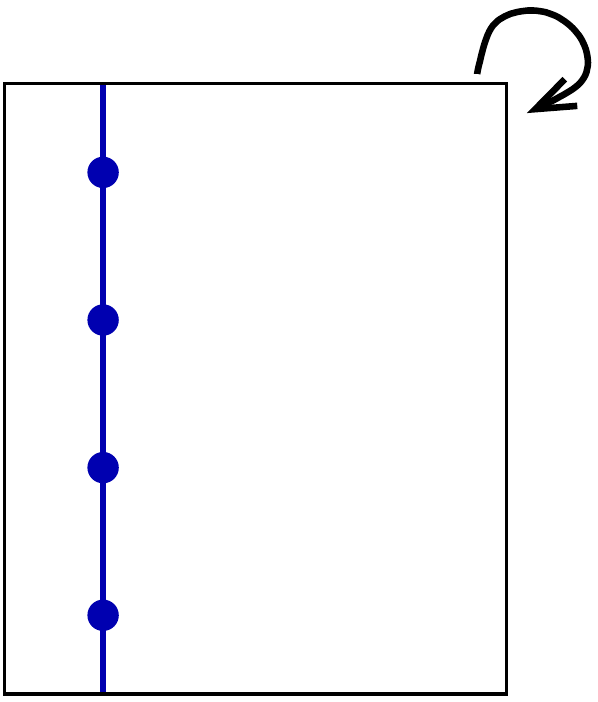}} 
\cell{15.5}{0}{c}{(a)}
\cell{30}{45}{r}{nilpotent $T$}
\cell{3}{39.5}{l}{$V$}
\cell{8}{36}{l}{$v$}
\cell{8}{26.7}{l}{$T(v)$}
\cell{8}{17.2}{l}{$T^2(v)$}
\cell{8}{8}{l}{$0$}
\cell{44}{3}{bl}{\includegraphics[height=44mm]{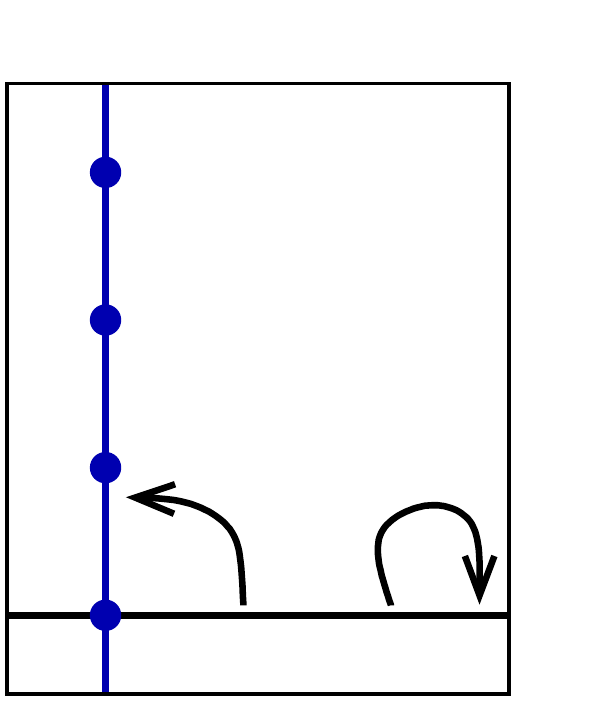}}
\cell{61.5}{0}{c}{(b)}
\cell{47}{39.5}{l}{$V$}
\cell{72}{6}{c}{$V^\perp$}
\cell{49}{36}{r}{$v_0$}
\cell{49}{26.7}{r}{$v_1$}
\cell{49}{17.2}{r}{$v_2$}
\cell{49}{6.5}{r}{$0$}
\cell{60}{17}{c}{$T_{V^{\!\perp}\! V}$}
\cell{71}{21.5}{c}{$T_{V^{\!\perp}\!V^{\!\perp}}$}
\cell{70.5}{18}{c}{nil}
\cell{87.5}{3}{bl}{\includegraphics[height=44mm]{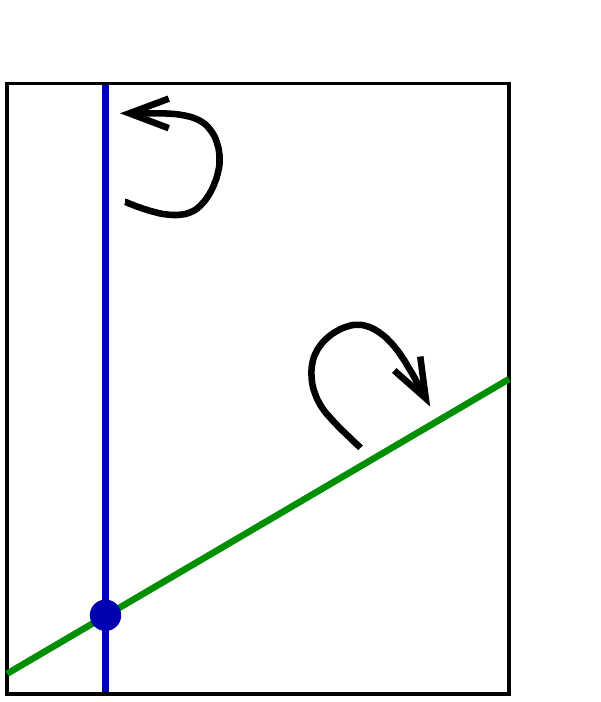}}
\cell{103}{0}{c}{(c)}
\cell{90.5}{37}{l}{$V$}
\cell{115}{17}{c}{$W$}
\cell{106}{24}{r}{$S$}
\cell{106}{20.5}{r}{nil}
\cell{102}{38.5}{l}{$R$}
\cell{102}{35}{l}{auto}
\end{picture}%
\caption{Schematic diagrams of the proof of Theorem~\ref{thm:main}}
\label{fig:lin}
\end{figure}
Let $k \geq 0$ be least such that $T^k(v) = 0$, and put
\[
\vc{v} = (v_0, v_1, \ldots, v_{k - 1}) = (v, T(v), \ldots, T^{k - 1}(v)).
\]
By Lemma~\ref{lemma:li}, $\vc{v}$ is an ordered basis of $V$.  Evidently
$TV \sub V$, and the action of $T$ on $V$ is completely determined by
$\vc{v}$: it is the nilpotent $v_0 \mapsto \cdots \mapsto v_{k - 1} \mapsto
0$.  Also, $v$ is determined by $\vc{v}$, since $v = 0$ if $k = 0$ and $v =
v_0$ otherwise.  The restriction $T|_{V^\perp} \from V^\perp \to X = V
\oplus V^\perp$ decomposes as a linear map $T_{V^\perp V} \from V^\perp \to
V$ and a linear operator $T_{V^\perp V^\perp}$ on $V^\perp$.  Hence by
Lemma~\ref{lemma:block}, to give a pair $(T, v)$ is equivalent to giving a
linear subspace $V$ equipped with an ordered basis, a linear map $V^\perp
\to V$, and a nilpotent on $V^\perp$ (Fig.~\ref{fig:lin}\bhardref{b}).

By Lemma~\ref{lemma:graphs}, the linear maps $V^\perp \to V$ are in
bijection with the subspaces $W$ of $X$ complementary to $V$.  Given such a
$W$, there is a canonical isomorphism $V^\perp \to W$ (as in
Section~\ref{sec:la}), so we can equivalently replace the nilpotent
$T_{V^\perp V^\perp}$ on $V^\perp$ by a nilpotent $S$ on $W$.  Hence to give
a pair $(T, v)$ is equivalent to giving a pair $(V, W)$ of complementary
subspaces together with an ordered basis of $V$ and a nilpotent on $W$.

Now using the bijection chosen at the start of the proof, we can
equivalently replace the ordered basis of $V$ by an automorphism $R$ of $V$
(Fig.~\ref{fig:lin}\bhardref{c}).  Thus, we now have a pair $(V, W)$ of
complementary subspaces of $X$, an automorphism of $V$, and a nilpotent on
$W$.  And by Lemma~\ref{lemma:fitting}, to give such data is exactly to
give a linear operator on $X$.
\end{proof}

It follows that on a vector space with finite cardinality $N$, an operator
chosen uniformly at random has probability $1/N$ of being nilpotent.

\paragraph{Remarks}
In both Joyal's proof of Cayley's formula and our proof of
Theorem~\ref{thm:main}, the amount of arbitrary choice can be
reduced.  In the Cayley case, it suffices to choose a single total order
on $X$: for this induces a total order on each subset $V$, hence, via
the torsor argument of the Introduction, a bijection between orders on
and permutations of $V$.  In the linear case, it suffices to choose a
single ordered basis of $X$.  The matrix echelon algorithm then
produces an ordered basis of each subspace $V$, hence (by the torsor
argument) a bijection between ordered bases and automorphisms of $V$.
The Steinitz exchange algorithm produces a complement $V^\perp$
of each subspace $V$.

The proof of Theorem~\ref{thm:main} establishes slightly more than is
stated.  For a nilpotent $T$ and a vector $v$, let
$\deg_v(T)$ denote the least $k \geq 0$ such that $T^k(v) = 0$. 
Then the proof shows that for all integers $k$, 
\[
\bigl\{ (T, v) \in \Nil(X) \times X \such \deg_v(T) = k\bigr\}
\iso
\biggl\{ 
Q \in \Lin(X) \such 
\dim \biggl( \bigcap_{i \geq 0} \im Q^i \biggr) = k
\biggr\}. 
\]

\paragraph{Acknowledgements} This work was supported by a Leverhulme Trust
Research Fellowship. I thank Joachim Kock and Darij Grinberg for their
comments.

\bibliography{mathrefs}

\end{document}